\theoremstyle{plain}
\newtheorem*{theorem*}{Theorem}
\newcommand{\on}{\operatorname}
\newcommand{\up}{\prescript{*}{}}
\newcommand{\uup}{\prescript{**}{}}
\title{A note on Diophantine subsets of large fields}
\author{Andrew Kwon}
\date{}
\begin{document}
\maketitle
\begin{abstract}
	Large fields (also called ample, anti-mordellic) generalize many fields of classical interest, such as algebraically closed fields, real-closed fields, and $p$-adic fields. In this note we generalize a result of Fehm and prove that finite unions of affine translates of proper subfields are never diophantine subsets of perfect large fields.
\end{abstract}
\section{Introduction}
Recall the definition of a large field:\\

\noindent \textbf{Definition.} A field $k$ is \emph{large} if every curve $C$ defined over $k$ with at least one smooth $k$-point has infinitely many $k$-rational points.\\

Such fields contrast starkly with some of the usual fields of arithmetic interest in view of, say, the Mordell Conjecture. Nonetheless, there are many examples of large fields (e.g., henselian fields, pseudo-algebraically closed fields, etc.), and large fields have proven to be rich in their properties and usefulness for Galois theory. For example, the Shafarevich Conjecture on the structure of $\on{Gal}(\bar{\Q}/\Q^{\text{ab}})$ would follow from knowing that $\Q^{\text{ab}}$ is large \cite{Po96}. There have also been recent interesting developments in model theory where large fields are a main player \cite{JTWY}. See \cite{BF13, Po14} and references therein for much more about large fields.\\

Harbater \cite{Ha09} first posed questions about cardinality and properties of rational points over large fields for the study of quasi-free profinite groups. An argument due to Pop shows that for large fields $k$, $C(k) \neq \varnothing$ implies $C(k)$ is actually as large as possible, namely $|C(k)| = |k|$. This was used by Fehm and Petersen \cite{FP21} to conclude that if $k$ is a large field, not algebraic over a finite field, and $A/k$ is a non-zero abelian variety, then the rank of $A(k)$ equals the cardinality of $k$. (As noted by Pop in \cite{Po14}, by far the hardest part is showing that the rank is infinite.)\\

The focus of this note is on diophantine subsets of large fields. Characterizing diophantine subsets is difficult but can have substantial applications. A complete characterization of diophantine subsets of $\Z$ resulted in the completion of Hilbert's Tenth Problem, although neither result is known for general rings of integers. Koll\'ar was the first to show that the large hypothesis on an uncountable field $k$ of characteristic 0 has interesting consequences for the diophantine subsets of function fields over $k$ \cite{Ko08}. For example, if $k(t) \subset K_{1} \subset K_{2}$ are finite extensions and $K_{1}$ is a diophantine subset of $K_{2}$, then $K_{1} = K_{2}$. We instead place our attention on diophantine subsets of the field $k$ itself, and aim to generalize the following analogous result.

\begin{theorem*} (\cite{Fe10}, Theorem 2) Let $k$ be a perfect large field and $\Sigma \subset k$ be an infinite diophantine subset. Then for every proper subfield $k_{0} \subset k$, $|\Sigma \setminus k_{0}| = |k|$.
\end{theorem*}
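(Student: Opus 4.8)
The plan is to reduce the statement to the image of the $k$-points of a single curve and then to exploit largeness there. Write $\Sigma=\pi(X(k))$ for the affine $k$-variety $X\subseteq\mathbb{A}^{n+1}$ cut out by the equations defining $\Sigma$, with $\pi$ the first projection. Then $\Sigma$ is the union of the $\pi(X_i(k))$ over the finitely many $k$-irreducible components $X_i$ of $X_{\mathrm{red}}$, so some component $Y$ has $\pi(Y(k))$ infinite; hence $Y$ is integral, $\dim Y\ge 1$, and $\pi|_Y$ is non-constant. Since $k$ is perfect, $Y$ is generically smooth; if $Y(k)$ misses the smooth locus we replace $Y$ by a component of the (lower-dimensional) singular locus whose $\pi$-image on $k$-points is still infinite, and iterate — the dimension drops strictly and cannot reach $0$ — so we may assume $Y$ has a smooth $k$-point $P$. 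Cutting $Y$ by $\dim Y-1$ sufficiently general hyperplanes through $P$ yields an integral curve $C/k$, smooth at $P$, with $\phi:=\pi|_C\colon C\to\mathbb{A}^1$ non-constant and $\phi(C(k))\subseteq\Sigma$. By the Pop-type consequence of largeness recorded in the introduction, $|C(k)|=|k|$, and since $\phi$ has finite fibers, $|\phi(C(k))|=|k|$ (so already $|\Sigma|=|k|$). If $\deg\phi=1$ then $\phi$ is birational onto a cofinite subset of $\mathbb{A}^1$, so $\Sigma$ contains a cofinite subset of $k$ and we are done; otherwise assume $\deg\phi\ge 2$, and (using perfectness once more, replacing $C$ by a Frobenius twist and $\phi$ by its separable part, which alters neither $C(k)$ nor $\phi(C(k))$) assume $\phi$ separable.

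It thus suffices to prove: for an integral curve $C/k$ with a smooth $k$-point, a non-constant $\phi\colon C\to\mathbb{A}^1$, and any proper subfield $k_0\subsetneq k$, one has $|\phi(C(k))\setminus k_0|=|k|$. Suppose not, and put $S=\phi(C(k))\setminus k_0$, so $|S|<|k|$. The fibers of $\phi$ over $S$ account for only $<|k|$ of the $k$-points, so $Z:=\{Q\in C(k):\phi(Q)\in k_0\}$ has $|Z|=|C(k)|=|k|$: almost every $k$-point of $C$ maps into $k_0$. Every $Q\in Z$ lies in a fiber $\phi^{-1}(a)$, $a\in k_0$, which is a finite scheme defined over $k_1:=k_0(\text{coefficients defining }C\text{ and }\phi)$ — a field obtained from $k_0$ by adjoining finitely many elements, so $|k_1|=|k_0|$. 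Hence each $Q\in Z$ has coordinates algebraic of bounded degree over $k_1$; that is, $Z\subseteq C(k_2)$ where $k_2=\overline{k_1}\cap k$ is the relative algebraic closure of $k_1$ in $k$, with $|k_2|=|k_0|$. Thus $C(k)$ and $C(k_2)$ agree outside a set of size $<|k|$.

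If $|k_0|<|k|$ this is a contradiction, since then $|C(k)|\le|C(k_2)|+(\deg\phi)\,|S|\le\max(|k_0|,|S|)<|k|$, whereas $|C(k)|=|k|$. The genuine obstacle is the case $|k_0|=|k|$, where this counting is vacuous. There we have shown that the curve $C$ — which carries a smooth $k$-point — is, up to a negligible set of $k$-points, defined over and rational over a proper subfield $k_2$ that is relatively algebraically closed in $k$ (at least when $k$ is not algebraic over $k_1$, so that $k_2\subsetneq k$; the residual case, with $k/k_0$ algebraic, must be handled separately), and the task is to see that largeness of $k$ forbids this. For $C$ of genus $0$ it does at once: $\mathbb{P}^1(k_2)\subsetneq\mathbb{P}^1(k)$ differ in $|k|$ points. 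For $C$ of genus $\ge 1$, and in the algebraic case, I expect one must use largeness more seriously. A model for this is the henselian case: choosing $P$ to be a smooth point of $C$ at which $\phi$ is unramified, $\phi$ induces a local isomorphism, so $\phi(C(k))$ contains a ball $B$ about $\phi(P)$; being a coset of an open additive subgroup $G$ of $k$, and since no proper subfield of a nontrivially valued field contains an open set, one has $[G:G\cap k_0]\ge 2$ and hence $|B\setminus k_0|=|G|=|k|$. For a general large field there is no such topology, and replacing this local argument by a purely geometric one — exploiting that $C(k)$ is not merely infinite but strongly Zariski-dense, so that $C(k)$ (or the $k$-points of a positive-dimensional family of twists of $C$) cannot all be absorbed into a single algebraic extension of $k_0$ adjoined with finitely many elements, or reducing via a cyclic cover to an abelian variety and invoking a rank statement in the spirit of Fehm--Petersen — is, I believe, where the real work lies.
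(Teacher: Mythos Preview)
Your proposal is incomplete, and you say so yourself: the case $|k_0|=|k|$ is not handled. The cardinality bound $|C(k)|\le|C(k_2)|+(\deg\phi)\,|S|$ only yields a contradiction when $\max(|k_0|,\aleph_0,|S|)<|k|$, so it needs $k$ uncountable and $|k_0|<|k|$; for a proper subfield of the same cardinality as $k$---the generic situation---there is nothing to count, and the sketches you offer (genus $0$, henselian balls, families of twists, abelian-variety ranks) are not a proof. The residual case where $k$ is algebraic over $k_0$ adjoined with finitely many constants is likewise left open.

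The paper's argument (following Fehm) bypasses any comparison of $|k_0|$ and $|k|$. Producing a smooth morphism with image in $\Sigma$---your first paragraph is close to this, and is essentially Lemma~\ref{lem:finding a smooth morphism}---is only the setup. The decisive step is Proposition~\ref{prop:smooth case}: for every \emph{finite} $A\subset k$ there exist $\lambda,\nu$ with $A\subset\frac{1}{\nu-\lambda}(\Sigma-\lambda)$. Passing to a $|k|^{+}$-saturated ultrapower $\uup{k}$, all of these finite conditions are realized simultaneously by a single pair $\uup{\lambda},\uup{\nu}\in\uup{\Sigma}$, giving $k\subset\frac{1}{\uup{\nu}-\uup{\lambda}}(\uup{\Sigma}-\uup{\lambda})$. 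If $\Sigma\subset k_0$ then $\uup{\Sigma}\subset\uup{k_0}$, and since $\uup{\lambda},\uup{\nu}\in\uup{k_0}$ the right-hand side is contained in $\uup{k_0}$, so $k\subset\uup{k_0}$; but the elementary fact $\uup{k_0}\cap k=k_0$ (Lemma~\ref{lem:basics}(ii)) then forces $k=k_0$, a contradiction. This is the idea your argument is missing: separate $k$ from $k_0$ inside a saturated ultrapower rather than by cardinality.
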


With a refinement of Fehm's methods, we deduce that many more general subsets have \emph{too much structure} to be diophantine in $k$:

\begin{theorem}\label{thm:main result} Let $k$, $\Sigma$ be as above, $\{k_{i}\}_{i \in I}$ be any finite collection of proper subfields of $k$, and $\{\alpha_{i}\}_{i \in I}, \{\beta_{i}\}_{i \in I}$ be subsets of $k$. Then, 
	\[
	\left|\Sigma \setminus \bigcup_{i \in I} (\alpha_{i} k_{i} + \beta_{i})\right| = |k|.
	\]
\end{theorem}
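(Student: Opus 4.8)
The plan is to run Fehm's proof of Theorem~2 with enough extra care to handle several subfields at once: pass from $\Sigma$ to a curve whose rational points feed into $\Sigma$, and then show, using largeness, that those rational points are too spread out to be swallowed by $B:=\bigcup_{i\in I}(\alpha_i k_i+\beta_i)$. First some harmless reductions. For $\gamma\in k^{\times}$, $\delta\in k$ the set $\gamma\Sigma+\delta$ is again diophantine and $\gamma B+\delta$ has the same shape, so we may normalise freely; pieces with $\alpha_i=0$ are single points, and pieces with $|k_i|<|k|$ contribute a set of size $<|k|$, so — since $|\Sigma|=|k|$ by Theorem~2 applied with $k_0$ the prime field (proper, as $k$ is large) — we may delete all of these from $B$ and assume every $\alpha_i\neq 0$ and $|k_i|=|k|$. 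Now a diophantine set is $\pi(V(k))$ for an affine $k$-variety $V$ and a coordinate projection $\pi$; since $\Sigma$ is infinite and $k$ is perfect (so reduced $k$-varieties have dense smooth loci and Bertini slicing through a smooth rational point is available), we may replace $V$ by a smooth geometrically integral curve $C/k$ with $C(k)\neq\varnothing$ — whence $|C(k)|=|k|$ by Pop's theorem — carrying a non-constant $\psi\in k(C)$ with $\psi(C(k))\subseteq\Sigma$.

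Because $\psi$ has finite fibres it now suffices to prove $|\{P\in C(k):\psi(P)\notin B\}|=|k|$. Assume not. Then $\{P:\psi(P)\notin B\}$ has size $<|k|$, so $\{P:\psi(P)\in B\}$ has size $|k|$, and since $I$ is finite some single coset, say $B_{i_0}=\alpha_{i_0}k_{i_0}+\beta_{i_0}$, satisfies $|\{P\in C(k):\psi(P)\in B_{i_0}\}|=|k|$. Pick $a_0\in B_{i_0}\cap\psi(C(k))$ over which $\psi$ is unramified (such $a_0$ exists, else this locus would already be small), fix $P_0\in C(k)$ with $\psi(P_0)=a_0$, and for $c\in k$ let $D_c$ be the component through $(P_0,P_0)$ of $\{(P,Q)\in C\times C:\psi(Q)=c\psi(P)+(1-c)a_0\}$; for all but finitely many $c$ this is a geometrically integral curve, smooth at $(P_0,P_0)$, hence $|D_c(k)|=|k|$ by Pop's theorem, with both projections to $C$ finite. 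Writing $s=(\psi(P)-\beta_{i_0})/\alpha_{i_0}$ and $s_0=(a_0-\beta_{i_0})/\alpha_{i_0}$, which lie in $k_{i_0}$ as $\psi(P),a_0\in B_{i_0}$, one computes $(\psi(Q)-\beta_{i_0})/\alpha_{i_0}=c(s-s_0)+s_0$; thus as soon as $c\notin k_{i_0}$ and $\psi(P)\neq a_0$ one gets $\psi(Q)\notin B_{i_0}$. Combined with the assumption — which in the single-coset case forces $\{(P,Q)\in D_c(k):\psi(P)\notin B_{i_0}\text{ or }\psi(P)=a_0\}$ to have size $<|k|$ — projecting the remaining $|k|$ many $k$-points to the $Q$-coordinate yields $|\{Q\in C(k):\psi(Q)\notin B_{i_0}\}|=|k|$: this recovers Fehm's Theorem~2.

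The new content is to choose the pair $(a_0,c)$ so that the produced values $\psi(Q)=c(\psi(P)-a_0)+a_0$ avoid every $B_i$ simultaneously. Unwinding $\psi(Q)\in B_i$ as a condition on $c$, with $\psi(P)$ ranging over the relevant elements of (a coset of) $\alpha_{i_0}k_{i_0}$, the ``forbidden'' values of $c$ are contained in a finite union (one term per $i$) of product-type sets of the shape $\lambda_i\cdot k_{i_0}^{\times}\cdot(k_i+\mu_i)$. Each such set is thin in $k$: being a two-parameter image it has complement of cardinality $|k|$, and being of this form it cannot, together with the proper subfields $k_{i_0}$ and $k_i$, cover $k$ (a field is never a finite union of cosets of proper additive subgroups). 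Hence the admissible $c$ — outside $k_{i_0}$, outside all forbidden sets, and outside the finite non-smooth locus — form a set of size $|k|$, in particular nonempty; fixing such a $c$ produces $|k|$ many $Q\in C(k)$ with $\psi(Q)\notin B$, the desired contradiction.

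The main obstacle is exactly this last step, and it has two faces. First, the counting in the single-coset case used that $\{P:\psi(P)\notin B_{i_0}\}$ is small; in general the assumption only gives that $\{P:\psi(P)\notin B\}$ is small, so when more than one coset is ``heavy'' one has to isolate a situation in which the counting still goes through — e.g. by inducting on $|I|$ with a relative statement about curves, or by choosing the base point $a_0$ inside $\Sigma\setminus B$ itself rather than inside $B_{i_0}$. Second, one must actually verify the thinness of the product-type exceptional sets $\lambda_i k_{i_0}^{\times}(k_i+\mu_i)$ and, more delicately, that the relevant complements retain cardinality $|k|$: this is transparent when the subfield in question is small but needs a genuine transcendence-degree or vector-space argument when $|k_i|=|k|$, and extra attention when $[k:k_i]$ happens to be finite. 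Everything else — the curve extraction, Pop's cardinality input, and the pencil computation — is, up to cosmetics, Fehm's.
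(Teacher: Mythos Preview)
Your proposal is, by your own admission in the final paragraph, incomplete, and the two obstacles you flag are genuine rather than cosmetic. For the first: the pencil computation in your second paragraph needs $\{P\in C(k):\psi(P)\notin B_{i_0}\}$ to be small, whereas the contradiction hypothesis only gives that $\{P:\psi(P)\notin B\}$ is small; once several cosets are heavy there is no evident way to arrange that the base point $a_0$ (which is fixed \emph{before} $D_c$ is formed) lies in whichever coset happens to be heavy on $D_c(k)$, and neither of the fixes you suggest (``induct on $|I|$'', ``choose $a_0\in\Sigma\setminus B$'') is carried out. The second obstacle is more serious. The forbidden locus you write down is the set of $c$ for which \emph{some} admissible $t$ sends $\psi(Q)$ into $B_i$; but this is far too coarse, since the sets $\lambda_i\,k_{i_0}^{\times}(k_i+\mu_i)$ can have cardinality $|k|$ and their union can exhaust $k^{\times}$. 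What one actually needs is a $c$ for which the set of \emph{bad} $t$ has size $<|k|$; unwinding, the bad $t$ for index $i$ lie in a single coset of $k_{i_0}\cap c^{-1}\frac{\alpha_i}{\alpha_{i_0}}k_i$, and this intersection has size governed by $|k_{i_0}\cap k_i|$, which may well equal $|k|$. Your parenthetical appeal to ``a field is never a finite union of cosets of proper additive subgroups'' is exactly Neumann's lemma, but you are invoking it for multiplicative--additive hybrid sets to which it does not apply.

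The paper sidesteps both issues by a genuinely different mechanism. Rather than hunting for one good parameter $c$ on a fixed curve, it passes to a sufficiently saturated iterated ultrapower $\uup{k}$ and finds $\uup\lambda,\uup\nu\in\uup\Sigma$ with $k\subset\frac{1}{\uup\nu-\uup\lambda}(\uup\Sigma-\uup\lambda)$. A hypothetical covering $\Sigma\subset\bigcup_i(\alpha_ik_i+\beta_i)$ then transports to a covering of $(k,+)$ by finitely many cosets of the \emph{additive} subgroups $V_i=\bigl(\frac{\alpha_i}{\uup\nu-\uup\lambda}\,\uup{k_i}\bigr)\cap k$, each proper because $\uup{k_i}\cap k=k_i\subsetneq k$. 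Now Neumann's lemma applies cleanly to produce a $V_i$ of finite index in $(k,+)$, which is impossible since $k/V_i$ is simultaneously a nontrivial finite group and a $k_i$-vector space. In effect, saturation manufactures an idealized parameter satisfying all of your finitary constraints at once, and converts the multi-subfield interaction into a pure group-covering statement where no thinness analysis is required.
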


This implies that a finite union of proper subfields cannot be diophantine. The essence of the proof is to show that $\Sigma \setminus \bigcup_{i \in I} k_{i}$ cannot be empty, and then cardinality considerations will ensure that this set is actually as large as possible. The key for us will essentially be to relate a hypothetical set-theoretic cover $\Sigma \subset \bigcup_{i \in I} k_{i}$ to a group-theoretic cover of $(k,+)$ by subgroups that will be impossible by virtue of the following lemma \cite{Ne54}.

\begin{lemma} \label{lem:Neumann} Let $G$ be any group. Suppose there are distinct proper subgroups $H_{i}$, $i= 1, \ldots, n$ and finite sets of coset representatives $C_{i}$ for $H_{i}$ such that $G = \bigcup_{i=1}^{n} C_{i} H_{i}$. Then, at least one of the $H_{i}$ must have finite index in $G$.
\end{lemma}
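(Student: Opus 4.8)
The plan is to prove Lemma~\ref{lem:Neumann} by induction on $n$, the number of distinct subgroups involved; this is in essence B.H.~Neumann's original argument~\cite{Ne54}. Throughout I read $C_iH_i$ as the union of left cosets $\bigcup_{c\in C_i}cH_i$, the right-coset formulation being entirely symmetric. For the base case $n=1$ we have $G=C_1H_1$ with $C_1$ finite, whence $[G:H_1]\le|C_1|<\infty$ and there is nothing more to prove.

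For the inductive step, assume the lemma holds whenever the number of distinct subgroups involved is strictly less than $n$, and suppose toward a contradiction that $G=\bigcup_{i=1}^{n}C_iH_i$ while every $H_i$ has infinite index in $G$. In particular $[G:H_1]=\infty$, so, since only finitely many left cosets of $H_1$ occur among $\{cH_1:c\in C_1\}$, there is an element $g\in G$ whose coset $gH_1$ is disjoint from $C_1H_1$; because $\bigcup_i C_iH_i=G$, this forces $gH_1\subseteq\bigcup_{i=2}^{n}C_iH_i$. The point is that this one ``free'' coset of $H_1$ can be slid onto every coset of $H_1$ appearing in the first term: for each $c\in C_1$ we have $cH_1=(cg^{-1})(gH_1)\subseteq\bigcup_{i=2}^{n}(cg^{-1}C_i)H_i$, a finite union of left cosets of $H_2,\dots,H_n$. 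Uniting over $c\in C_1$ shows that $C_1H_1$, and therefore all of $G=C_1H_1\cup\bigcup_{i=2}^{n}C_iH_i$, is covered by finitely many left cosets of the $n-1$ distinct proper subgroups $H_2,\dots,H_n$ alone. By the induction hypothesis one of $H_2,\dots,H_n$ has finite index in $G$, contradicting the standing assumption; hence some $H_i$ must have finite index.

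I do not anticipate a serious obstacle, since the argument is elementary, but the step that warrants care is the coset bookkeeping: one must verify that left multiplication by the fixed element $cg^{-1}$ sends a left coset to a left coset, so that $(cg^{-1})(c'H_i)=(cg^{-1}c')H_i$ and the slid family is genuinely a finite union of cosets of the same subgroups $H_i$; and one must note that $H_2,\dots,H_n$ stay distinct and proper, so that the induction hypothesis really applies. If one insists that each $C_i$ be an honest system of coset representatives rather than an arbitrary finite set, it suffices to thin the finite sets $\bigcup_{c\in C_1}cg^{-1}C_i$ (together with $C_i$) down to a transversal of the cosets they meet, which leaves the union unchanged.
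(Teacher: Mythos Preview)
Your argument is correct and is essentially the same as the paper's: both proceed by induction on $n$, find a coset of one of the $H_i$ not already listed, and use left translation by finitely many elements to absorb all the cosets of that subgroup into finitely many cosets of the remaining $n-1$ subgroups. The only cosmetic differences are that the paper eliminates $H_n$ rather than $H_1$ and argues directly (branching on whether $C_nH_n=G$) instead of wrapping the step in a contradiction hypothesis.
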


\noindent \textit{Proof.} We induct on $n$. Evidently if $n=1$ then $H_{1}$ has finite index in $G$. In general, if $C_{n} H_{n} = G$ then we are done for the same reason; otherwise, there is some $h \not\in C_{n} H_{n}$, with $h H_{n}$ a coset disjoint from $C_{n} H_{n}$. Thus, 
\[
	h H_{n} \subset \bigcup_{i=1}^{n-1} C_{i} H_{i} \Leftrightarrow H_{n} \subset \bigcup_{i=1}^{n-1} h^{-1} C_{i} H_{i},
\]
and we may replace $C_{n} H_{n}$ with the larger collection of cosets $\bigcup_{i=1}^{n-1} C_{n} h^{-1} C_{i} H_{i}$. This reduces the total number of distinct subgroups, and the claim follows inductively.\qed\\

In the interest of keeping the paper somewhat self-contained, the basics of model theory that we will need, namely saturated ultrapowers, are summarized in section 2. In section 3, we show how smooth morphisms yield ``big'' diophantine sets. The proof of the main result is finished in section 4, and in section 5 we discuss some further directions and technical obstacles.

\section{Model-theoretic background}
We take a quick and dirty approach to collect all of the basic facts about (saturated) ultrapowers that we will need, ignoring the more general settings for saturated models or ultraproducts. For details, see any introductory text on model theory, e.g. \cite{CK90}. Let $k$ be a field, $J$ any index set, and $\mathcal{U}$ a non-principal ultrafilter on $J$; then, one can define an equivalence relation on $k^{J}$ via
\[
	(a_{j})_{j} \sim_{\mathcal{U}} (b_{j})_{j} \Leftrightarrow \{j \, : \, a_{j} = b_{j}\} \in \mathcal{U}.
\]

\noindent \textbf{Definition.} The \emph{ultrapower} $\up{k}$ with respect to $J$, $\mathcal{U}$ is defined to be $k^{J} / \sim_{\mathcal{U}}$.\\

\noindent \textbf{Comments.} It is a standard fact that this construction actually yields a field, and $k$ embeds into $\up{k}$ via the diagonal embedding. While the notation $\up{k}$ makes no reference to $J, \mathcal{U}$, there will be no ambiguity in our setting. It may also be worth noting that it is more typical in the literature to write $k^{J}/\mathcal{U}$ than $k^{J}/\sim_{\mathcal{U}}$.\\

Among the most remarkable and important properties about ultrapowers is \L{}os' theorem, a special case of which is the following:

\begin{theorem}
	Any first order formula (in the language of fields) is true in $k$ if and only if it is true in $\up{k}$.
\end{theorem}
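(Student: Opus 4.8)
The plan is to prove, by induction on the structure of first-order formulas, the full statement that for every formula $\varphi(x_1,\dots,x_m)$ in the language of fields and every tuple $(a^1_j)_j,\dots,(a^m_j)_j$ representing elements $\alpha^1,\dots,\alpha^m \in \up{k}$, one has $\up{k} \models \varphi(\alpha^1,\dots,\alpha^m)$ if and only if $\{\, j : k \models \varphi(a^1_j,\dots,a^m_j)\,\} \in \mathcal{U}$; the displayed statement (sentences, no free variables) is then the special case $m = 0$, since the diagonal embedding identifies a sentence being true in $k$ with the index set $J$ being the relevant set, and $J \in \mathcal{U}$. First I would dispense with the base case: atomic formulas in the language of fields are of the form $p(\bar x) = q(\bar x)$ for polynomials (equivalently $t(\bar x) = 0$ for a term $t$), and here the equivalence is essentially the definition of $\sim_{\mathcal U}$ together with the fact that the ring operations on $\up k$ are defined coordinatewise, so $t$ evaluated at the tuple of classes is the class of the coordinatewise evaluations; that the relevant index set lies in $\mathcal U$ is then immediate.

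Next I would handle the inductive steps. For negation and conjunction the argument uses only that $\mathcal{U}$ is a \emph{filter} closed under supersets and finite intersections, together with the fact that $\mathcal U$ is an \emph{ultrafilter}: a set is in $\mathcal U$ iff its complement is not, which is exactly what is needed to push $\neg$ through. The interesting case is the existential quantifier $\exists y\, \psi(\bar x, y)$. One direction is easy: if $\up k \models \exists y\, \psi(\bar\alpha, y)$ then pick a witness $\beta = (b_j)_j$, apply the induction hypothesis to $\psi$, and note that $\{\, j : k \models \psi(\bar a_j, b_j)\,\} \subseteq \{\, j : k \models \exists y\, \psi(\bar a_j, y)\,\}$, so the latter lies in $\mathcal U$. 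The converse requires the \emph{axiom of choice}: if $S := \{\, j : k \models \exists y\, \psi(\bar a_j, y)\,\} \in \mathcal U$, then for each $j \in S$ choose $b_j \in k$ with $k \models \psi(\bar a_j, b_j)$, set $b_j = 0$ (say) for $j \notin S$, let $\beta = (b_j)_j$, and apply the induction hypothesis in the reverse direction to conclude $\up k \models \psi(\bar\alpha, \beta)$, hence $\up k \models \exists y\, \psi(\bar\alpha, y)$.

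Finally I would remark that this is the only genuinely non-formal ingredient, and note two small bookkeeping points so the write-up is clean: first, one should fix representatives once and for all but observe that the statement is independent of the choice of representatives (this follows from the atomic case plus induction, or can be checked directly), and second, universal quantifiers and disjunctions need not be treated separately since $\forall$ and $\vee$ are definable from $\exists$, $\neg$, $\wedge$. The main obstacle — really the only place where any care is needed — is the existential step, both in seeing that one must invoke choice to assemble the global witness $\beta$ from the local witnesses $b_j$, and in making sure the induction hypothesis is applied to the \emph{same} tuple of representatives on both sides. Everything else is a routine unwinding of the ultrafilter axioms.
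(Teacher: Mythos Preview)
Your argument is the standard textbook proof of \L{}o\'s' theorem and is correct as written; the induction on formula complexity, the use of the ultrafilter property for negation, closure under finite intersections for conjunction, and the appeal to choice for the existential step are all handled properly. Note, however, that the paper itself does not prove this statement at all: it merely records it as a special case of \L{}o\'s' theorem and defers to the reference \cite{CK90} for details. So there is no ``paper's own proof'' to compare against --- you have supplied precisely the standard argument that the cited reference would give, which is more than the paper does.
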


From here, we can deduce the following elementary properties of ultrapowers we will use later.

\begin{lemma} \label{lem:basics}
	Let $k$ be a field and $\up{k}$ an ultrapower of $k$. The following hold.
	\begin{enumerate}[(i)]
		\item $k$ is finite $\Leftrightarrow \up{k}$ is finite.
		\item If $k_{i} \subset k$ is a proper subfield, then $\up{k_{i}} \cap k = k_{i}$.
	\end{enumerate}
\end{lemma}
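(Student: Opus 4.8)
The plan is to derive both parts directly from the transfer principle (\L{}os' theorem, stated above) together with the explicit description of the ultrapower as $k^{J}/\!\sim_{\mathcal{U}}$; no additional machinery should be needed.

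For (i) I would argue through elementary equivalence. If $k$ is finite with $|k| = q$, then the statement $\sigma_{q}$ asserting ``there exist exactly $q$ distinct elements'' is a genuine first-order sentence in the language of fields; it holds in $k$, hence in $\up{k}$ by the transfer theorem, so $\up{k}$ is finite (in fact of size $q$). Conversely, if $\up{k}$ is finite, say of size $q$, then $\sigma_{q}$ holds in $\up{k}$, hence in $k$, so $k$ is finite. (Equivalently, for the contrapositive one transfers the sentences ``there exist at least $n$ distinct elements'' for all $n$.) The only point of care is to phrase things via the sentences $\sigma_{q}$ rather than trying to express ``infinite'' by a single formula.

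For (ii) I would first pin down the identifications that make the statement meaningful: $k$ sits inside $\up{k}$ via the diagonal embedding, and since $k_{i} \subseteq k$ the coordinatewise inclusion $k_{i}^{J} \hookrightarrow k^{J}$ descends to an injection $\up{k_{i}} \hookrightarrow \up{k}$ — injective because whether two sequences from $k_{i}^{J}$ agree on a set belonging to $\mathcal{U}$ does not depend on the ambient field, provided $\up{k_{i}}$ is formed with the same $J$ and $\mathcal{U}$. With $k$ and $\up{k_{i}}$ regarded as subfields of $\up{k}$, the inclusion $k_{i} \subseteq \up{k_{i}} \cap k$ is immediate. For the reverse inclusion, take $x \in \up{k_{i}} \cap k$: then $x = [(a)_{j}]$ for some $a \in k$ and also $x = [(b_{j})_{j}]$ for some $(b_{j})_{j} \in k_{i}^{J}$, so $\{\, j : a = b_{j} \,\} \in \mathcal{U}$; since a set in $\mathcal{U}$ is nonempty, $a = b_{j} \in k_{i}$ for some index $j$, whence $x \in k_{i}$.

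I do not expect a genuine obstacle: part (i) is essentially a one-line transfer argument, and part (ii) reduces to the fact that membership in an ultrafilter forces the relevant index set to be nonempty. The only things to be careful about are bookkeeping — expressing ``finite of size $q$'' as an honest sentence, and being explicit that $\up{k_{i}}$ uses the same index set and ultrafilter as $\up{k}$ so that it embeds into $\up{k}$ in the way claimed. (Note that properness of $k_{i}$ plays no role in (ii); it is simply the case relevant to the rest of the paper.)
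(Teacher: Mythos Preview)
Your proposal is correct and follows essentially the same approach as the paper: part (i) via transfer of the first-order sentences bounding the number of elements, and part (ii) by unwinding the explicit description of $\up{k_{i}}$ and $k$ inside $\up{k}$. If anything, your treatment of (ii) is slightly more careful than the paper's, since you make explicit the step that the constant $a$ lies in $k_{i}$ because it agrees with some $b_{j}\in k_{i}$ on a (nonempty) set in $\mathcal{U}$.
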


\noindent \textit{Proof.} To (i). This follows from \L{}os' theorem, since for a fixed natural number $n$, the sentence ``there are at most $n$ distinct elements'' is expressible in first order.\\
To (ii). From the construction of the ultrapower, we see that the elements of $\up{k_{i}}$ are equivalence classes of functions on $J$ with values in $k_{i}$. On the other hand, the elements of $k$ (thought of as a subset of $\up{k}$) are equivalence classes of constant functions. Thus, their intersection consists of equivalence classes of constant functions with values in $k_{i}$, i.e., $k_{i} \subset \up{k}$.\qed\\

Also very useful and important are saturated ultrapowers; these are a particular kind of ultrapower that is in some sense ``large enough'' so that many formulas we will describe later can be satisfied simultaneously. We will not recall all of the basic definitions necessary to state things precisely, as the existence of saturated ultrapowers depends on a somewhat subtle analysis of properties of ultrafilters; rather we give an overview for the sake of a reader that may be less familiar with model theory.\\

\noindent \textbf{Definition.} An ultrapower $\up{k}$ is \emph{$\kappa$-saturated} if, for every subset $A$ of $\up{k}$ with $|A| < \kappa$, $\up{k}$ realizes every complete type over $A$.\\

In our setting, a (complete) type over $A$ is essentially a set of first-order formulas with constants coming from $A$ and a finite number of free variables, where every finite subset of formulas is satisfiable in $\up{k}$ after some assignment of the free variables. (Note, the finite satisfiability condition is intrinsic to the definition of a complete type.) Then, to say that $\up{k}$ \emph{realizes} the complete type means that all of the formulas can actually be satisfied simultaneously. The following special case of a theorem due to Keisler ensures that sufficiently saturated ultrapowers of fields exist (particularly because the language of fields is countable).

\begin{proposition} \label{prop:saturated ultrapower existence} (\cite{Ke10}, Theorem 10.5) For $J$ of infinite cardinality $\kappa$, there exists an ultrafilter $\mathcal{U}$ on $J$ such that $k^{J}/\mathcal{U}$ is $\kappa^{+}$-saturated. 
\end{proposition}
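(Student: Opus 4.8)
The plan is to derive the statement from its two classical ingredients. First, one establishes a purely combinatorial fact: on an index set $J$ with $|J| = \kappa$ there is a \emph{countably incomplete}, \emph{$\kappa^{+}$-good} ultrafilter $\mathcal{U}$. Second, one shows that the ultrapower by any such ultrafilter is $\kappa^{+}$-saturated, for structures in a language of cardinality $\le \kappa$ — in particular for the (countable) language of fields. Here $\mathcal{U}$ is \emph{countably incomplete} if some chain $J = Y_{0} \supseteq Y_{1} \supseteq \cdots$ of members of $\mathcal{U}$ has $\bigcap_{n} Y_{n} = \varnothing$, and $\mathcal{U}$ is \emph{$\kappa^{+}$-good} if every monotone $f \colon [\kappa]^{<\omega} \to \mathcal{U}$ (meaning $s \subseteq t \Rightarrow f(t) \subseteq f(s)$) has a multiplicative refinement: some $g \colon [\kappa]^{<\omega} \to \mathcal{U}$ with $g(s) \subseteq f(s)$ and $g(s \cup t) = g(s) \cap g(t)$.

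For the saturation step, fix $A \subseteq \up{k}$ with $|A| \le \kappa$ and a complete type $p(x)$ over $A$. Adjoining the elements of $A$ as new constants keeps the language of size $\le \kappa$, so we may assume $A = \varnothing$ and enumerate $p = \{\varphi_{\alpha}(x) : \alpha < \kappa\}$ (padding if necessary). For finite $s \subseteq \kappa$ set $\psi_{s} = \bigwedge_{\alpha \in s} \varphi_{\alpha}$; finite satisfiability of $p$ and \L{}os' theorem (applied to the constant family $(k)_{j \in J}$, with parameters) give $X_{s} := \{j \in J : k \models \exists x\, \psi_{s}(x)\} \in \mathcal{U}$. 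The function $f(s) = X_{s} \cap Y_{|s|}$ is monotone into $\mathcal{U}$; let $g \le f$ be a multiplicative refinement. For each $j \in J$ the family $\{s : j \in g(s)\}$ is directed upward (if $j \in g(s) \cap g(t)$ then $j \in g(s \cup t)$) with member-sizes bounded by the least $n$ with $j \notin Y_{n}$ (since $j \in g(s) \subseteq Y_{|s|}$), so it has a largest element $s_{j}$; then $j \in g(s_{j}) \subseteq X_{s_{j}}$, and we pick $a_{j} \in k$ with $k \models \psi_{s_{j}}(a_{j})$. Put $a = [(a_{j})_{j}] \in \up{k}$. For each $\alpha$, every $j \in g(\{\alpha\})$ has $\alpha \in s_{j}$, hence $k \models \varphi_{\alpha}(a_{j})$; since $g(\{\alpha\}) \in \mathcal{U}$, \L{}os' theorem yields $\up{k} \models \varphi_{\alpha}(a)$. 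So $a$ realizes $p$, and $\up{k}$ is $\kappa^{+}$-saturated.

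For the combinatorial step, one builds $\mathcal{U}$ by transfinite recursion of length $2^{\kappa}$. Since $|[\kappa]^{<\omega}| = \kappa$, there are $2^{\kappa}$ monotone functions $[\kappa]^{<\omega} \to \mathcal{P}(J)$ and $2^{\kappa}$ subsets of $J$; enumerate both and construct an increasing chain $(\mathcal{F}_{\xi})_{\xi < 2^{\kappa}}$ of proper filters, starting from the filter generated by a fixed descending-to-empty sequence $(Y_{n})$ (which guarantees countable incompleteness). At a successor stage one either decides the next subset of $J$ (adding whichever of it or its complement keeps the filter proper) or, if the next monotone function $f$ already has all its values in the current filter, adjoins a multiplicative refinement $g \le f$; the union $\mathcal{U} = \bigcup_{\xi} \mathcal{F}_{\xi}$ is then a countably incomplete $\kappa^{+}$-good ultrafilter. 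The one genuinely hard point — which I expect to be the main obstacle — is the lemma that such a refinement can always be adjoined without collapsing the filter: given a proper filter $\mathcal{F}$ on $J$ generated by $\le \kappa$ sets and a monotone $f \colon [\kappa]^{<\omega} \to \mathcal{F}$, there is a multiplicative $g \le f$ such that each $g(s)$ meets every member of $\mathcal{F}$ (which, by multiplicativity and closure of $\mathcal{F}$ under finite intersections, is exactly what keeps $\mathcal{F} \cup \{g(s)\}_{s}$ proper). This is the Keisler–Kunen construction: one fixes a suitably ``independent'' partition of $J$ compatible with $\mathcal{F}$ (exploiting countable incompleteness) and defines $g$ by a delicate induction on $|s|$, and verifying multiplicativity and properness simultaneously is where essentially all the work resides. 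Keisler carried this out under GCH; Kunen's later argument removed that hypothesis, which is what makes the statement a theorem of ZFC.
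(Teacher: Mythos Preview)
The paper does not prove this proposition at all: it is stated as a black box with a citation to Keisler, so there is no ``paper's own proof'' to compare against. Your proposal, by contrast, sketches the genuine argument, and the saturation half is correct and cleanly written --- the use of countable incompleteness to bound $|s_j|$ and the directedness argument for the family $\{s : j \in g(s)\}$ are exactly right.

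On the combinatorial half, your outline is faithful to the Keisler--Kunen construction, and you are right to flag the refinement lemma as the crux. One small wrinkle worth noting: in the transfinite recursion as you describe it, the filter $\mathcal{F}_\xi$ at stage $\xi$ need not be generated by $\le \kappa$ sets once $\xi \ge \kappa^{+}$, so the lemma as you state it (``generated by $\le \kappa$ sets'') does not literally apply at every stage. The standard fix is to run the construction relative to a large independent family of functions $J \to \kappa$ that is consumed as the recursion proceeds; the refinement $g$ at each stage is built from finitely many of these functions, and properness is maintained because enough of the independent family survives. You also need each monotone $f$ to be revisited cofinally often (or to invoke $\operatorname{cf}(2^{\kappa}) > \kappa$) so that any $f$ whose range eventually lands in $\mathcal{U}$ is handled at some stage where its range already lies in $\mathcal{F}_\xi$. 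These are bookkeeping issues rather than gaps in the idea, and since the paper itself simply cites the result, your sketch already goes well beyond what the paper requires.
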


In the last section, we will need one more model-theoretic lemma which relates the covering of $\Sigma$ by the $\alpha_{i} k_{i} + \beta_{i}$ to that of $\up{\Sigma}$ by the $\alpha_{i} \up{k_{i}} + \beta_{i}$. Here we give a hands-on proof.

\begin{lemma} \label{lem:extending to ultrapowers} Fix a field $k$ with a finite collection of proper subsets $A_{i} \subset k$, $\up{k} = k^{J}/\mathcal{U}$ an ultrapower of $k$, and $\Sigma \subset k$ be any subset. If $\Sigma \subset \bigcup_{i \in I} A_{i}$, then $\up{\Sigma} \subset \bigcup_{i \in I} \up{A_{i}}$.
\end{lemma}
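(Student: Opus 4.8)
The plan is to argue elementwise, exploiting the finiteness of $I$ together with the basic ultrafilter fact that if $J$ is partitioned into finitely many pieces then exactly one of them lies in $\mathcal{U}$. Nothing deeper than \L{}os-style bookkeeping is needed; the content is entirely in that partition property.

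First I would pin down what $\up{A_{i}}$ means for a mere subset $A_{i} \subset k$ (as opposed to a subfield, as in Lemma \ref{lem:basics}(ii)): declare $\up{A_{i}}$ to be the set of classes $[(a_{j})_{j}] \in \up{k}$ that admit a representative with $a_{j} \in A_{i}$ for all $j$, equivalently the classes for which $\{j : a_{j} \in A_{i}\} \in \mathcal{U}$. One checks this is well defined (independent of representative) and agrees with the earlier notation when $A_{i}$ is a subfield. Then I would take an arbitrary $x \in \up{\Sigma}$ and fix a representative $(\sigma_{j})_{j}$; replacing it on a set outside $\mathcal{U}$ if necessary, I may assume $\sigma_{j} \in \Sigma$ for every $j \in J$.

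Now, since $\Sigma \subset \bigcup_{i \in I} A_{i}$, for each $j$ there is at least one index with $\sigma_{j}$ in the corresponding $A_{i}$; choose one and call it $\iota(j) \in I$. This yields a function $\iota \colon J \to I$, hence a partition of $J$ into the fibers $J_{i} := \iota^{-1}(i)$ for $i \in I$. Because $I$ is finite and $\mathcal{U}$ is an ultrafilter, exactly one fiber, say $J_{i_{0}}$, belongs to $\mathcal{U}$. On $J_{i_{0}}$ we have $\sigma_{j} \in A_{i_{0}}$ by construction, so $\{j : \sigma_{j} \in A_{i_{0}}\} \supseteq J_{i_{0}} \in \mathcal{U}$, and therefore $x = [(\sigma_{j})_{j}] \in \up{A_{i_{0}}} \subset \bigcup_{i \in I} \up{A_{i}}$, as desired.

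The one step that genuinely carries the weight — and the sole place where the hypothesis that the collection $\{A_{i}\}$ is finite is used — is the appeal to the partition property of ultrafilters to extract the single fiber $J_{i_{0}} \in \mathcal{U}$; for an infinite index set $I$ this breaks down and the conclusion can fail. Everything else (choice of representative inside $\Sigma$, well-definedness of $\up{A_{i}}$, monotonicity of membership in $\mathcal{U}$) is routine. I expect the write-up to be short.
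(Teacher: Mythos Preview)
Your argument is correct and matches the paper's proof essentially line for line: pick a representative with all coordinates in $\Sigma$, partition $J$ according to which $A_{i}$ each coordinate lands in, and use the finite-partition property of ultrafilters to locate a single $J_{i_{0}} \in \mathcal{U}$. The only difference is that you spell out the definition and well-definedness of $\up{A_{i}}$ more carefully than the paper does.
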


\textit{Proof.} Pick any element $[(x_{j})_{j}] \in \up{\Sigma}$, where for each $j \in J$, $x_{j}$ is an element of $\Sigma$. By assumption, we can partition $J$ into $|I|$ disjoint subsets $J_{i}$ such that $x_{j} \in A_{i}$ for each $j \in J_{i}$. (If $x_{j}$ is in $A_{i}$ and $A_{i'}$, then an assignment can be made arbitrarily.) However, it is easy to verify from the definition of ultrafilters that at least one of the $J_{i}$ is an element of $\mathcal{U}$, which means that $[(x_{j})_{j}] \in \up{A_{i}}$. Furthermore, it is easy to verify that this is independent of the choice of representative. \qed\\

\section{Images of smooth morphisms}

Now we briefly rephrase an argument from the proof of Theorem 3.1 of \cite{Po14} that allows us to find some arithmetic structure in diophantine sets that arise from smooth morphisms.

\begin{proposition}\label{prop:smooth case} Suppose $\varnothing \neq \Sigma \subset k = \mathbb{A}^{1}(k)$ is the image of $\pi(X(k))$ for some smooth morphism $\pi : X \to \mathbb{A}^{1}$. Then, for any finite subset $A \subset k$, and any $\lambda \in \Sigma$, there are infinitely many $\nu \in \Sigma$ such that 
\[
	A \subset \frac{1}{\nu - \lambda} (\Sigma - \lambda).
\]
\end{proposition}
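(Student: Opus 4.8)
The plan is to repackage the $|A|$ affine-linear membership conditions ``$\lambda+a(\nu-\lambda)\in\Sigma$ for $a\in A$'' as the single condition that a fibre product over $\mathbb A^1$ have a $k$-point above $\nu$, and then harvest infinitely many such $\nu$ from largeness. Two harmless normalisations come first. Since ``$0\in\frac{1}{\nu-\lambda}(\Sigma-\lambda)$'' just says $\lambda\in\Sigma$, which holds by hypothesis, I may delete $0$ from $A$; and since ``$1\in\frac{1}{\nu-\lambda}(\Sigma-\lambda)$'' says exactly $\nu\in\Sigma$, I may enlarge $A$ by $1$ — this only strengthens the conclusion and forces any $\nu$ that I produce to lie in $\Sigma$. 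Fix $x_0\in X(k)$ with $\pi(x_0)=\lambda$ and replace $X$ by an affine open through $x_0$: this shrinks $\pi(X(k))$ but keeps it inside $\Sigma$ and keeps $\lambda$ in the image, and now everything in sight is of finite type over $k$.

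For $a\in A$ let $\phi_a\colon\mathbb A^1\to\mathbb A^1$ be $t\mapsto\lambda+a(t-\lambda)$, set $X_a:=X\times_{\mathbb A^1,\phi_a}\mathbb A^1$ with second projection $\pi_a\colon X_a\to\mathbb A^1$, and let $Y$ be the fibre product over $\mathbb A^1$ of the family $(\pi_a)_{a\in A}$, with structure morphism $\rho\colon Y\to\mathbb A^1$. Since smoothness is stable under base change and composition, each $\pi_a$, and then $\rho$, is smooth; in particular $Y$ is smooth over $k$. The point of the construction is the description of rational points: a point $y\in Y(k)$ over $\nu:=\rho(y)$ is a tuple $(x_a)_{a\in A}$ of points $x_a\in X(k)$ with $\pi(x_a)=\phi_a(\nu)=\lambda+a(\nu-\lambda)$, so $\lambda+a(\nu-\lambda)\in\pi(X(k))\subseteq\Sigma$ for every $a\in A$; hence, provided $\nu\neq\lambda$, we get $a=\frac{(\lambda+a(\nu-\lambda))-\lambda}{\nu-\lambda}\in\frac{1}{\nu-\lambda}(\Sigma-\lambda)$, i.e. $A\subseteq\frac{1}{\nu-\lambda}(\Sigma-\lambda)$, and $\nu\in\Sigma$ by the normalisation $1\in A$. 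Since $\phi_a(\lambda)=\lambda$ for all $a$, the tuple $y_0:=\big((x_0)_{a\in A};\lambda\big)$ is a $k$-point of $Y$ above $\lambda$, necessarily a smooth $k$-point because $Y/k$ is smooth.

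It therefore suffices to show $\rho(Y(k))$ is infinite, and this is the only place largeness is used. Let $Y_0$ be the irreducible component of $Y$ through $y_0$; it is unique and smooth over $k$ (as $\mathcal O_{Y,y_0}$ is regular), and $\rho|_{Y_0}$ is smooth, hence open, hence dominant onto an open subset of the (infinite) line $\mathbb A^1$. One can finish in either of two equivalent ways. The quickest is to quote the known strengthening of largeness (see \cite{Po14}) that a $k$-variety with a smooth $k$-point has Zariski-dense rational points: applied to $Y_0$ this makes $Y_0(k)$ dense, and a dominant morphism to $\mathbb A^1$ cannot carry a dense set into finitely many fibres, so $\rho(Y_0(k))$ is infinite. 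To use only the stated curve definition, instead cut $Y_0$ down: pick $g_1,\dots,g_{\dim Y_0-1}\in\mathcal O(Y_0)$ whose differentials at $y_0$, together with the (nonzero, since $\rho$ is smooth at $y_0$) differential $d\rho|_{y_0}$, form a basis of the cotangent space, so that $(\rho,g_1,\dots,g_{\dim Y_0-1})$ is \'etale near $y_0$; then $C:=\{g_i=g_i(y_0)\}$ is a curve that is smooth at $y_0$ and on which $\rho$ is \'etale, so $|C(k)|=|k|$ by largeness, and $\rho|_C$, being non-constant on an irreducible curve, has finite fibres, whence $\rho(C(k))$ is again infinite. Either way we obtain infinitely many admissible $\nu$, which is the claim.

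The only non-formal step is precisely this extraction of infinitely many $k$-points of $Y$ with distinct images in $\mathbb A^1(k)$; everything else is routine base-change bookkeeping. Within it, the delicate point is either the appeal to the Zariski-density form of the large hypothesis, or — if one goes through curves — checking that the curve cut out near $y_0$ may be taken geometrically integral with a smooth $k$-point, which is automatic since a smooth $k$-rational point lies on a single geometric component, Galois-stable and hence defined over $k$.
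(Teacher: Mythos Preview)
Your proof is correct and follows essentially the same route as the paper's: build the iterated fibre product over $\mathbb{A}^{1}$ encoding the conditions $\lambda + a(\nu-\lambda)\in\Sigma$ for $a\in A$, note it is smooth over $k$ with a $k$-point above $\lambda$, and invoke largeness. Your write-up is in fact more careful than the paper's sketch in explicitly justifying why one obtains infinitely many \emph{distinct} values of $\nu$ (via Zariski density or by slicing to a curve), a point the paper leaves implicit.
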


For a more explicit proof in coordinates, see loc. cit.

\begin{proof}
	Let $\pi_{\lambda}$ denote the composition of $\pi$ with the translation by $-\lambda$, in particular so that $0$ is in the image of $\pi_{\lambda}(X(k))$, say $\pi_{\lambda}(x) = 0$. We first demonstrate how to get a single element of $A$ as a ratio of two elements of $\Sigma - \lambda$, but we disregard $0$ because it will always be contained in $\Sigma - \lambda$ by construction.\\

	Consider the following pullback diagram, where $a$ is some nonzero element of $A$:
\[\begin{tikzcd}
		{X_{\{a\}}} && X \\
	X & {\mathbb{A}^1} & {\mathbb{A}^1}
	\arrow["\pi_{\lambda}"', from=2-1, to=2-2]
	\arrow["\pi_{\lambda}", from=1-3, to=2-3]
	\arrow["{\cdot 1/a}"', from=2-2, to=2-3]
	\arrow["p_{0}", from=1-1, to=1-3]
	\arrow["p_{a}", from=1-1, to=2-1]
\end{tikzcd}\]
Evidently the map $\pi_{\lambda}$ is smooth, hence $X_{\{a\}} \to X$ is smooth and so $X_{\{a\}} \to \on{Spec} k$ is smooth. Furthermore, $X_{\{a\}}$ has $k$-rational points because $x : \on{Spec} k \to X$ factors through $X_{\{a\}}$. Since $k$ is large, we conclude that $X_{\{a\}}$ must have infinitely many $k$-rational points. For all but finitely many $\tilde{x} \in X_{\{a\}}(k)$, if $x_{i} = p_{i}(\tilde{x})$ for $i \in \{0,a\}$, then $\pi_{\lambda}(x_{a}) / a = \pi_{\lambda}(x_{0}) \neq 0$, hence $a = \pi_{\lambda}(x_{a}) / \pi_{\lambda}(x_{0})$.\\

To realize all of $A$ in the quotient of $\Sigma - \lambda$ by a fixed element, we vary $a$ in the above construction and repeatedly take pullbacks along morphisms of the form $(1/a \circ \pi_{\lambda}) : X \to \mathbb{A}^{1} \to \mathbb{A}^{1}$. For example, for two nonzero elements $a_{1}, a_{2} \in A$, we construct $X_{\{a_{1}, a_{2}\}}$ via
\[\begin{tikzcd}
	{X_{\{a_1,a_2\}}} && {X_{\{a_2\}}} \\
	& {X_{\{a_1\}}} && X \\
	&& X \\
	& X && {\mathbb{A}^1}
	\arrow[from=1-1, to=1-3]
	\arrow[from=1-1, to=2-2]
	\arrow[from=1-3, to=2-4]
	\arrow["{p_{a_{2}}}", from=1-3, to=3-3, near start]
	\arrow[from=2-2, to=2-4]
	\arrow["{p_{a_{1}}}"', from=2-2, to=4-2]
	\arrow[from=2-4, to=4-4]
	\arrow["{(1/a_2 \circ \pi_\lambda)}", from=3-3, to=4-4, near start,swap]
	\arrow["{(1/a_1 \circ \pi_\lambda)}"', from=4-2, to=4-4, near start]
\end{tikzcd}\]

In general, we find a smooth variety $X_{A}$ with infinitely many $k$-rational points such that 
\[\begin{tikzcd}
		{X_{A}} && X \\
	X & {\mathbb{A}^1} & {\mathbb{A}^1}
	\arrow["\pi_{\lambda}"', from=2-1, to=2-2]
	\arrow["\pi_{\lambda}", from=1-3, to=2-3]
	\arrow["{\cdot 1/a}"', from=2-2, to=2-3]
	\arrow[from=1-1, to=1-3]
	\arrow[from=1-1, to=2-1]
\end{tikzcd}\]
commutes for every $a \in A$. Therefore, all but finitely many of the points in $X_{A}(k)$ will produce points $x_{0}, \{x_{a}\}_{a \in A}$ in $X(k)$ such that $a = \pi_{\lambda}(x_{a}) / \pi_{\lambda}(x_{0})$ for all $a \in A$, as desired. We take $\nu = \pi(x_{0})$ to get the claimed result.
\end{proof}

\section{The main reduction}
We now present the main reduction steps, where we use (part of) a lemma from \cite{Fe10} that shows an ultrapower $\up{\Sigma}$ ``contains'' the image of a smooth morphism when $k$ is perfect, and then demonstrate that an even more highly saturated ultrapower of $\Sigma$ must therefore contain $k$ (up to some scaling). 

\begin{lemma}[\cite{Fe10}, Lemma 8]\label{lem:finding a smooth morphism} 
	Let $k$ be a perfect large field and $k_{1} \subset k$ a subfield. If $\Sigma$ is existentially $k_{1}$-definable and $\Sigma \not\subseteq \bar{k}_{1}$, then there is a smooth morphism $f : C \to \mathbb{A}^{1}$ with $\varnothing \neq f(C(k)) \subset \Sigma$.
\end{lemma}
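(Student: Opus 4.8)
The plan is to transcribe the hypothesis into algebraic geometry, arrange that our one known rational point sits at the generic point of the relevant variety, and then correct the (merely dominant) morphism to $\mathbb{A}^1$ into an honestly smooth one — which is the only place perfectness of $k$ is used. Concretely, writing an existential $k_1$-definition of $\Sigma$ in disjunctive normal form and clearing each inequation $f\neq 0$ by a fresh coordinate $z$ with $zf=1$, we present $\Sigma$ as $\pi(V(k))$ for an affine $k_1$-variety $V$ and a coordinate projection $\pi:V\to\mathbb{A}^1$ (target coordinate $t$). Since $\Sigma\not\subseteq\bar{k}_{1}$, pick $v\in V(k)$ with $\pi(v)\in k$ transcendental over $k_1$, let $x$ be the image point of $v$ in $V$, and replace $V$ by the closed integral $k_1$-subvariety $\overline{\{x\}}$: then $V$ is integral, $\pi:V\to\mathbb{A}^1$ is dominant (transcendence of $\pi(v)$ forces the image of $x$ to be the generic point of $\mathbb{A}^1$), and $v$ factors through the generic point of $V$, hence through \emph{every} dense open subscheme of $V$. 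So from now on we may shrink $V$ freely without losing our rational point, and the goal is a smooth morphism $f:C\to\mathbb{A}^1$, where $C$ will be a dense open in a variety carrying a dominant map to $\mathbb{A}^1$, with $\varnothing\neq f(C(k))\subseteq\Sigma$.

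In characteristic $0$ one is essentially done: $\pi$ is generically smooth, so its smooth locus $C\subseteq V$ is a dense open containing $v$, and $f:=\pi|_C$ works, since each $c\in C(k)\subseteq V(k)$ has $f(c)=\pi(c)\in\Sigma$. In characteristic $p$ this fails outright — $\pi$ may be \emph{nowhere} smooth, already the $p$-power map $\mathbb{A}^1\to\mathbb{A}^1$ being such — and surmounting this is the real content of the lemma and the only place the hypothesis that $k$ is perfect enters.

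To remove the inseparability, set $F=k_1(t)\subseteq K=k_1(V)$, a finitely generated extension; choosing a transcendence basis of $K/F$ and taking $K'$ to be the separable closure of $F$ adjoined that basis inside $K$, we obtain $F\subseteq K'\subseteq K$ with $K'/F$ separably generated and $K/K'$ finite purely inseparable. Realize $K'$ by a $k_1$-variety $W$ with a dominant morphism $\rho:W\to\mathbb{A}^1$ inducing $F\subseteq K'$; since $K'/F$ is separably generated, $\rho$ is generically smooth. The inclusion $K'\subseteq K$ gives a dominant rational map $V\dashrightarrow W$ over $\mathbb{A}^1$, hence a morphism $\varphi:V^{\circ}\to W^{\circ}$ on suitable dense opens, and after shrinking further we may assume $\varphi$ is purely inseparable and surjective and $\rho|_{W^{\circ}}$ is smooth. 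The key point is that, \emph{because $k$ is perfect}, $\varphi$ induces a bijection on $k$-points with $W^{\circ}(k)=\varphi(V^{\circ}(k))$: a purely inseparable morphism is radicial with purely inseparable residue extensions, so its fibre over a $k$-point is $\on{Spec}$ of a finite local $k$-algebra whose residue field, being purely inseparable over $k$, equals $k$ — and such an algebra admits exactly one $k$-algebra map to $k$ since $k$ is reduced. Now take $C:=W^{\circ}$ and $f:=\rho|_C$. Then $f$ is smooth; $C(k)\ni\varphi(v)$ (which exists because $v$ factors through the generic point) is nonempty; and for any $c\in C(k)$ its unique $\varphi$-preimage in $V^{\circ}(k)\subseteq V(k)$ maps under $\pi$ to $\rho(c)=f(c)$, whence $f(c)\in\Sigma$. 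Thus $\varnothing\neq f(C(k))\subseteq\Sigma$. If $C$ is required to be a curve, a relative Bertini argument cuts $W^{\circ}$ down to a curve finite \'etale over $\mathbb{A}^1$ retaining a rational point.

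The genuinely delicate step is this last one: setting up the separable/purely-inseparable factorization over the correct base field, controlling normalizations, images, and flatness so that ``$\rho$ is generically smooth'' and ``$\varphi$ is purely inseparable'' hold on honest dense opens, and — the crucial observation — recognizing that perfectness of $k$ is precisely what renders the inseparable factor $\varphi$ invisible on $k$-rational points. Everything else is routine bookkeeping with dimensions and residue fields.
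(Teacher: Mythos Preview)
The paper does not prove this lemma; it is quoted from \cite{Fe10} and used as a black box. (The \LaTeX\ source does contain, commented out between \texttt{\textbackslash iffalse} and \texttt{\textbackslash fi}, a related but distinct argument over $k$ rather than $k_1$: apply Stacks Tag~04KM to $k(t)\hookrightarrow k(X)$, use perfectness of $k$ to recognise the inseparable part of the base as an iterated Frobenius on $\mathbb{A}^1$, and obtain a smooth $\pi':X'\to\mathbb{A}^1$ with $\pi'(X'(k))^{p^n}\subseteq\Sigma$ --- enough for the subsequent corollary, but not the lemma as stated.)

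Your argument is sound and does prove the lemma as stated. It differs from the suppressed sketch in the direction of the separable/inseparable factorisation: you place the purely inseparable piece $K/K'$ \emph{above} the separable piece $K'/F$, so the smooth morphism $\rho$ lives on $W$ and the inseparable correction $\varphi:V^\circ\to W^\circ$ sits over it; you then dispose of $\varphi$ via the observation that a finite radicial surjection is bijective on $k$-points when $k$ is perfect. One step you pass over quickly deserves a word: to know $\varphi$ is honestly radicial (not merely at the generic point), shrink $W^\circ$ to its normal locus; integrality over a normal base with purely inseparable fraction-field extension then forces $a^{p^n}\in\mathcal{O}_{W^\circ}$ for every local section $a$ of $\mathcal{O}_{V^\circ}$, so each fibre is local with purely inseparable residue extension and hence carries a unique $k$-point. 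With that addition the proof is complete; the closing Bertini remark is unnecessary, since the lemma does not require $C$ to be a curve.
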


The way we may apply the above lemma is by passing to an $\aleph_{1}$-saturated ultrapower of $k$, so that $\up{\Sigma}$ will be uncountable and in particular not contained in the algebraic closure of its field of definition (i.e., the field of definition of $\pi : X \to \mathbb{A}^{1}$), which is countable. Thus, we get the following.

\begin{corollary}\label{cor:finite sentence}
	For any infinite diophantine subset $\Sigma \subset k$, there is an ultrapower $\up{k}$ such that for any finite subset $A \subset k$, there exist $\up{\lambda}, \up{\nu} \in \up{\Sigma}$ such that
	\[
		A \subset \frac{1}{\up{\nu} - \up{\lambda}}(\up{\Sigma} - \up{\lambda}).
	\]
\end{corollary}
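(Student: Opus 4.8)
The plan is to combine Lemma~\ref{lem:finding a smooth morphism} with Proposition~\ref{prop:smooth case} after first passing to a suitably saturated ultrapower. Since $\Sigma \subset k$ is diophantine, it is existentially definable over some finitely generated (hence countable, as the language of fields is countable) subfield $k_1 \subset k$; let $\pi : X \to \mathbb{A}^1$ be a morphism witnessing this, defined over $k_1$. First I would invoke Proposition~\ref{prop:saturated ultrapower existence} with $J$ countably infinite to obtain an $\aleph_1$-saturated ultrapower $\up{k} = k^J/\mathcal{U}$. By \L os' theorem, $\up{\Sigma}$ is the set of $\up{k}$-points cut out by the same existential formula over $k_1$, and $k_1$ is still the (countable) field over which this is defined inside $\up{k}$.

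Next I would verify the hypothesis $\up{\Sigma} \not\subseteq \bar{k}_1$ needed to apply Lemma~\ref{lem:finding a smooth morphism} over the field $\up{k}$ (which is perfect and large by \L os' theorem — both properties being first-order or reducible to first-order schemes). The point is a cardinality count: $\bar{k}_1$ is countable since $k_1$ is, whereas $\up{\Sigma}$ is uncountable. To see the latter, note $\Sigma$ is infinite, so $\up{\Sigma}$ contains an ultrapower of an infinite set; by $\aleph_1$-saturation (realize the type asserting a new element distinct from any given countable list of elements of $\up{\Sigma}$, which is finitely satisfiable precisely because $\Sigma$ is infinite) $\up{\Sigma}$ cannot be countable. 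Hence $\up{\Sigma} \not\subseteq \bar{k}_1$, and Lemma~\ref{lem:finding a smooth morphism} applied to $\up{k} \supset k_1$ yields a smooth morphism $f : C \to \mathbb{A}^1$ over $\up{k}$ with $\varnothing \neq f(C(\up{k})) \subset \up{\Sigma}$.

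Now I would apply Proposition~\ref{prop:smooth case} with the field taken to be $\up{k}$, the diophantine set taken to be (the subset of $\up{\Sigma}$ equal to) $f(C(\up{k}))$, and the finite subset taken to be our given $A \subset k \subset \up{k}$. Fixing any $\up{\lambda} \in f(C(\up{k})) \subset \up{\Sigma}$, the proposition produces (infinitely many, though we need only one) $\up{\nu} \in f(C(\up{k})) \subset \up{\Sigma}$ with
\[
	A \subset \frac{1}{\up{\nu} - \up{\lambda}}\bigl(f(C(\up{k})) - \up{\lambda}\bigr) \subset \frac{1}{\up{\nu} - \up{\lambda}}(\up{\Sigma} - \up{\lambda}),
\]
which is exactly the claimed conclusion.

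The main obstacle, and the step deserving the most care, is the bookkeeping around the field of definition: one must make sure that passing to $\up{k}$ does not enlarge the field over which $\Sigma$ is existentially defined (it does not — the defining formula and its parameters are unchanged), and that the ``countable field of definition vs.\ uncountable $\up{\Sigma}$'' comparison is legitimate, i.e.\ that $\aleph_1$-saturation genuinely forces $\up{\Sigma}$ to be uncountable. A minor secondary point is checking that ``perfect'' and ``large'' transfer to $\up{k}$; perfectness is immediate (either $\mathrm{char} = 0$, or the Frobenius is surjective, an $\forall\exists$ statement), and largeness transfers because it can be phrased as a first-order scheme over all curves of bounded degree, or simply because $k \preceq \up{k}$ and largeness is elementary in this sense — this is standard but worth a sentence.
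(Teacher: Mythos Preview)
Your proposal is correct and follows essentially the same route as the paper: pass to an $\aleph_1$-saturated ultrapower so that $\up{\Sigma}$ becomes uncountable and hence escapes the (countable) algebraic closure of the field of definition, apply Lemma~\ref{lem:finding a smooth morphism} over $\up{k}$ to extract a smooth morphism landing in $\up{\Sigma}$, and then feed a finite $A \subset k$ into Proposition~\ref{prop:smooth case}. The paper compresses all of this into a single sentence preceding the corollary, whereas you have usefully spelled out the verifications (transfer of perfectness and largeness to $\up{k}$, the saturation argument forcing $|\up{\Sigma}| > \aleph_0$, and the invariance of the defining formula and its parameter field under the passage to $\up{k}$).
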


To make clear how the saturation is used, we rephrase this statement in terms of a (partial) 2-type $\{\varphi_{a}(x_{0},x_{1})\}_{a \in k}$. For each $a \in k$, consider the first order formula 
\[
	\varphi_{a}(x_{0}, x_{1}) : \exists x \in \Sigma, a = \frac{x-x_{0}}{x_{1} - x_{0}},
\]
where $x_{0}, x_{1}$ are free variables. The above corollary implies the finite satisfiability of the $\varphi_{a}$ in $\up{k}$ by assigning $x_{0} = \lambda$, $x_{1} = \nu$ corresponding to the finite set $A \subset k$. Thus, if we choose another $|k|^{+}$-saturated ultrapower $\uup{k}$ of $\up{k}$ (via \hyperref[prop:saturated ultrapower existence]{Proposition 2.3}), all of the $\varphi_{a}$ will be satisfiable simultaneously in $\uup{k}$, i.e., there exist $\uup{\lambda}, \uup{\nu} \in \uup{k}$ such that 
\[
	k \subset \frac{1}{\uup{\nu} - \uup{\lambda}}(\uup{\Sigma} - \uup{\lambda}).
\]
Note that the formula defining $\Sigma$ in $k$ will define $\up{\Sigma}$ (resp. $\uup{\Sigma}$) in $\up{k}$ (resp. $\uup{k}$). We are now ready to prove the penultimate result.

\begin{proposition} \label{prop:main tool} In the notation above, $\Sigma \setminus \bigcup_{i\in I} (\alpha_{i} k_{i} + \beta_{i})$ is always non-empty.
\end{proposition}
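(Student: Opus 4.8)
The plan is to argue by contradiction: assume $\Sigma \subseteq \bigcup_{i \in I}(\alpha_i k_i + \beta_i)$ and deduce something impossible. First I would discard the degenerate indices: if $\alpha_i = 0$ then $\alpha_i k_i + \beta_i = \{\beta_i\}$ is a single point, and since $\Sigma \setminus \{a\} = \Sigma \cap \{x : \exists y,\ (x-a)y = 1\}$ is again an infinite diophantine set, after deleting these finitely many points I may assume every $\alpha_i \neq 0$. The heart of the argument is then to promote this set-theoretic cover of $\Sigma$ to a covering of the whole field. Applying Lemma \ref{lem:extending to ultrapowers} twice (to $k \subseteq \up{k}$ and then to $\up{k} \subseteq \uup{k}$), and using that $\uup{(\alpha_i k_i + \beta_i)} = \alpha_i \uup{k_i} + \beta_i$ (immediate from the construction of the ultrapower, since $\alpha_i,\beta_i$ embed diagonally), the hypothetical cover becomes $\uup{\Sigma} \subseteq \bigcup_{i \in I}(\alpha_i \uup{k_i} + \beta_i)$. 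Feeding in the containment $k \subseteq \frac{1}{\uup{\nu} - \uup{\lambda}}(\uup{\Sigma} - \uup{\lambda})$ established just above, I obtain
\[
	k \;\subseteq\; \bigcup_{i \in I}\bigl(c_i\,\uup{k_i} + d_i\bigr), \qquad \text{where}\ \ c_i := \frac{\alpha_i}{\uup{\nu} - \uup{\lambda}}, \quad d_i := \frac{\beta_i - \uup{\lambda}}{\uup{\nu} - \uup{\lambda}},
\]
now viewed as a statement about $k$ sitting inside $\uup{k}$, with each $c_i \neq 0$ because $\alpha_i \neq 0$.

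Next I would turn this into a group-theoretic statement and invoke Neumann's Lemma \ref{lem:Neumann}. Each $c_i\,\uup{k_i}$ is an additive subgroup of $(\uup{k},+)$ and $c_i\,\uup{k_i} + d_i$ one of its cosets; for each $i$ for which this coset meets $k$, choosing $e_i$ in the intersection gives $k \cap (c_i\,\uup{k_i} + d_i) = e_i + H_i$ with $H_i := k \cap c_i\,\uup{k_i}$ an additive subgroup of $(k,+)$. Thus $(k,+)$ is covered by finitely many cosets of the $H_i$. I would first check that no $H_i$ equals $k$: if $k \subseteq c_i\,\uup{k_i}$, then dividing by $c_i$ and using that $\uup{k_i}$ is a field containing $c_i^{-1}$ forces $k \subseteq \uup{k_i}$, hence $k = \uup{k_i} \cap k = k_i$ by two applications of Lemma \ref{lem:basics}(ii), contradicting that $k_i$ is proper. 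So the $H_i$ are proper subgroups, and Lemma \ref{lem:Neumann} (after merging any repeated subgroups, each still with a finite set of representatives) forces some $H_{i_0}$ to have finite index in $(k,+)$.

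The final step is to rule out such a subgroup: I claim $(k,+)$ has no proper finite-index subgroup of the form $k \cap c\,\uup{k_0}$ with $k_0 := k_{i_0} \subsetneq k$ a subfield and $c := c_{i_0} \neq 0$, which contradicts the previous step. In characteristic $0$ this is immediate, since $(k,+)$ is then a $\Q$-vector space, hence divisible, hence has no proper finite-index subgroup at all. In characteristic $p$ the point is that $H_{i_0}$ carries extra symmetry: for $a \in k_0^{\times}$ one has $a H_{i_0} = (ak) \cap (ac\,\uup{k_0}) = k \cap c\,\uup{k_0} = H_{i_0}$, so $H_{i_0}$ is in fact a $k_0$-subspace of $k$. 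If $k_0$ is infinite then $\dim_{\mathbb{F}_p} k_0 = \infty$, so a proper $k_0$-subspace of $k$ has infinite $\mathbb{F}_p$-codimension, contradicting finite index; thus $H_{i_0} = k$, already excluded. If $k_0$ is finite then $\uup{k_0}$ is finite by Lemma \ref{lem:basics}(i), so $c\,\uup{k_0}$, and hence $H_{i_0}$, is finite, which forces $k$ to be finite — absurd, as $k \supseteq \Sigma$ is infinite. In every case we reach a contradiction, so $\Sigma \setminus \bigcup_{i \in I}(\alpha_i k_i + \beta_i) \neq \varnothing$.

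I expect the middle step to be the main obstacle: engineering the passage from the \emph{geometric} cover of $\Sigma$ to a \emph{group-theoretic} cover of $(k,+)$ to which Neumann's lemma applies, and then showing a proper finite-index subgroup is genuinely impossible. The subtlety is that in characteristic $p$ the additive group of a field really does have proper finite-index subgroups, so one cannot argue by pure divisibility; one must exploit that the relevant subgroup is a scaled intersection with an ultrapower of a subfield, which forces either extra multiplicative symmetry (a $k_0$-subspace of infinite $\mathbb{F}_p$-codimension) or outright finiteness.
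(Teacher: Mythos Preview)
Your argument is correct and follows essentially the same route as the paper's proof: lift the cover to $\uup{\Sigma}$ via Lemma~\ref{lem:extending to ultrapowers}, use the inclusion $k \subset \frac{1}{\uup{\nu}-\uup{\lambda}}(\uup{\Sigma}-\uup{\lambda})$ to get a coset cover of $(k,+)$ by the subgroups $H_i = k \cap c_i\,\uup{k_i}$, show these are proper via Lemma~\ref{lem:basics}(ii), apply Neumann's Lemma, and rule out finite index using the $k_i$-vector-space structure (infinite $k_i$) or finiteness of $\uup{k_i}$ (finite $k_i$). Your preliminary excision of the $\alpha_i=0$ indices and your separate treatment of characteristic $0$ are harmless but unnecessary, since $H_i=\{0\}$ and the prime-subfield case are already absorbed by the general argument.
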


\begin{proof} Suppose for the sake of contradiction that $\Sigma \subset \bigcup_{i \in I} (\alpha_{i} k_{i} + \beta_{i})$. Choosing $\up{k}$, $\uup{k}$ as above, we note that $\uup{(\alpha_{i} k_{i} + \beta_{i})} = \alpha_{i} \uup{k_{i}} + \beta_{i}$ as subsets of $\uup{k}$. Then, we deduce from \hyperref[lem:extending to ultrapowers]{Lemma 2.4} that $\uup{\Sigma} \subset \bigcup_{i \in I} \alpha_{i} \uup{k_{i}} + \beta_{i}$. Therefore,
	\[
		k \subset \frac{1}{\uup{\nu} - \uup{\lambda}}( \uup{\Sigma} - \uup{\lambda}) \subset \bigcup_{i \in I} \frac{1}{\uup{\nu} - \uup{\lambda}} (\alpha_{i} \uup{k_{i}} + \beta_{i} - \uup{\lambda}).
	\]
	\noindent At last, the group theory comes into play. For ease of notation, let $V_{i} = (\frac{\alpha_{i}}{\uup{\nu} - \uup{\lambda}} \uup{k_{i}}) \cap k$ and $\tau_{i}= \frac{\beta_{i} - \uup{\lambda}}{\uup{\nu} - \uup{\lambda}}$. Without loss of generality, we may replace $\tau_{i}$ with another coset representative that is actually in $k$. Then, 
\[
	k = \bigcup_{i \in I} V_{i} + \tau_{i}, 
\]
and we claim that all the $V_{i}$ are proper additive subgroups of $k$. Supposing to the contrary that $\frac{\alpha_{i}}{\uup{\nu} - \uup{\lambda}} \uup{k_{i}} \supset k$, then in particular $1 \in \frac{\alpha_{i}}{\uup{\nu} - \uup{\lambda}} \uup{k_{i}}$, hence $\frac{\alpha_{i}}{\uup{\nu} - \uup{\lambda}} \uup{k_{i}} = \uup{k_{i}}$. However, this implies that $\uup{k_{i}}\supset k$, while $\uup{k_{i}} \cap k = k_{i}$ by \hyperref[lem:basics]{Lemma 2.2}. Thus, the $V_{i}$ are all proper additive subgroups of $k$, and we may apply the \hyperref[lem:Neumann]{Neumann lemma} to this cover to deduce that there is a $V_{i}$ with finite index in $k$ as \emph{groups}; however, note also that $V_{i}$ and $k$ are vector spaces over $k_{i}$. Thus, $k / V_{i}$ is a non-trivial finite abelian group, but also a $k_{i}$-vector space. This is impossible if $k_{i}$ is infinite; yet on the other hand, if $k_{i}$ is finite, then so is $\uup{k_{i}}$, and so the finite subgroup $V_{i}$ cannot have finite index in $k$, which is evidently infinite. We conclude that $\Sigma \subset \bigcup_{i \in I}\alpha_{i} k_{i} + \beta_{i}$ is impossible, as desired.
\end{proof}

At this point, the proof of \hyperref[thm:main result]{Theorem 1.1} is short.\\

\noindent \textit{Proof of Theorem 1.1.} In the case that $k$ is countable, we are guaranteed the existence of some element $\sigma \in \Sigma \setminus \bigcup_{i \in I} (\alpha_{i}k_{i} + \beta_{i})$. It is well-known that finitely generated fields are never large, hence the subfield generated by $\sigma$ will be strictly smaller than $k$. Thus, we may add that subfield to our set $\{k_{i}\}_{i \in I}$, and \hyperref[prop:main tool]{Proposition 4.3} ensures that we can find another element of $\Sigma$, different from $\sigma$, that is outside of $\bigcup_{i \in I} (\alpha_{i} k_{i} + \beta_{i})$. Iterating this demonstrates that $|\Sigma \setminus \bigcup_{i \in I} (\alpha_{i} k_{i} + \beta_{i})|$ is countably infinite.\\

If $k$ is not countable, suppose for the sake of contradiction that $|\Sigma \setminus \bigcup_{i \in I} (\alpha_{i} k_{i} + \beta_{i})| < |k|$. Then, we may similarly consider the subfield $F$ of $k$ generated by all the elements in $\Sigma \setminus \bigcup_{i \in I} (\alpha_{i} k_{i} + \beta_{i})$ over the prime field. Evidently, $|F| < |k|$ as well, and in particular must be a proper subfield. However, then we find that 
\[
	\Sigma \subset \bigcup_{i \in I} (\alpha_{i} k_{i} + \beta_{i}) \cup F,
\]
which is also impossible by \hyperref[prop:main tool]{Proposition 4.3}. We conclude that $|\Sigma \setminus \bigcup_{i \in I} (\alpha_{i} k_{i} + \beta_{i})| = |k|$.\qed

\section{Final comments and questions}
At several points in this argument, it was essential that $I$ is finite; for example, \hyperref[lem:extending to ultrapowers]{Lemma 2.4} and \hyperref[lem:Neumann]{Lemma 1.2} both require this assumption, and one may attempt to remove it. The former case concerns ultrafilters that are closed under countable intersection, also referred to as $\sigma$-complete ultrafilters, and the existence of such ultrafilters is already related to subtle set-theoretic questions (namely the existence of measurable cardinals) and cannot be proven in ZFC.\\

In the latter case, it could anyways be of independent interest to formulate a Neumann lemma that addresses infinitely many subgroups, where there are some obvious constraints; for example, if the family of subgroups of $G$ are indexed by $I$, then one could reasonably require $|I| < |G|$ and ask for a subgroup in the family of index less than $|G|$. Nonetheless, there exist counterexamples to some simple generalizations of the Neumann lemma \cite{JDH}.\\

These indicate that there are some substantial obstacles to removing the finiteness hypothesis on $I$. One can instead try to remove the perfect hypothesis on the field $k$, and in this direction the precedent is set by Anscombe \cite{An19}. There, it is proven for an imperfect large field of characteristc $p$ that if an infinite diophantine subset $\Sigma$ is contained in a subfield $k_{0} \subset k$, then $k_{0} \supset k^{p^{n}}$ for some $n$. The key difficulty in adapting the arguments of this note is in ``finding a smooth morphism'' whose image on $k$-points is contained in $\Sigma$, while Anscombe manages to find such a morphism whose image is contained in $\F_{p}(\up{\Sigma})^{(p^{\infty})}$, the largest perfect subfield of $\F_{p}(\up{\Sigma})$.




\section*{Acknowledgements}
I am indebted to my advisor Florian Pop for his guidance, patience, and encouragement. I would also like to thank Irfan Alam and Eben Blaisdell, who helped me to see that nonstandard objects and methods are not so strange. This material is based upon work supported by the National Science Foundation Graduate Research Fellowship DGE-2236662.\\

\end{document}